\documentclass[12pt]{article}
\usepackage{amsmath,amsopn,amssymb,amsthm,amsfonts}
\usepackage[T2A]{fontenc}

\textwidth=165mm \oddsidemargin=5mm \topmargin=-17mm
\textheight=250mm

\newtheorem{theorem}{Theorem}[section]
\newtheorem{corollary}[theorem]{Corollary}

\newtheorem{definition}[theorem]{Definition}
\newtheorem{lemma}[theorem]{Lemma}

\def\dim {{\rm dim}}
\def\div {{\rm div}}

\def\grad {{\rm grad}}

\begin{document}

\vspace{7mm}

\centerline{\large \bf Bi-invariant metric on symplectic diffeomorphisms  group}

\vspace{3mm}

\centerline{\large \bf N.~K.~Smolentsev }

\vspace{7mm}

\begin{abstract}
We show the existence of a weak bi-invariant symmetric nondegenerate 2-form  on  the symplectic  diffeomorphisms  group $\mathcal{D}_\omega$ of a symplectic Riemannian manifold $(M,g,\omega)$ and study its properties. We describe the Euler's equation on a Lie algebra of group $\mathcal{D}_\omega$ and calculate the sectional curvature of $\mathcal{D}_\omega (T^n)$.
\end{abstract}

\section{Preface}
Let $M$ be a compact manifold and let $G=Diff(M)$ be the group of all smooth (of class $C^\infty$) diffeomorphisms on $M$, with group operation being composition. The group $Diff(M)$ is a infinite-dimensional Frech\'{e}t manifold and there are nontrivial problems with the notion of smooth maps between Frech\'{e}t spaces. There is no canonical extension of the differential calculus from Banach spaces (which is the same as for $\mathbb{R}^n$) to Frech\'{e}t spaces.
It is possible to use the completion $Diff(M)$ in the Banach $C^k$-norm, $0\le  k< \infty$, or in
the Sobolev $H^s$-norm, $s>\dim(M)/2$. Then $Diff^k(M)$ and $Diff^s(M)$
become Banach and Hilbert manifolds, respectively. Then we consider the inverse limits of these Banach and Hilbert manifolds, respectively: $Diff(M)= \lim_{\leftarrow} Diff^k(M)$ becomes a so-called ILB- (Inverse Limit of Banach) Lie group, or with the Sobolev topologies
$Diff(M)= \lim_{\leftarrow} Diff^s(M)$ becomes a so-called ILH- (Inverse Limit of Hilbert) Lie group.
The main theorems of differential calculus are true for (Banach) Hilbert manifolds. The results are proved for $Diff^s(M)$ and are then extended to the Lie-Frech\'{e}t group $Diff(M)$.
See the study by Omori in \cite{Omo5} for details.
In \cite{Eb-Mar}, Ebin and Marsden showed that the group $\mathcal{D}_\omega = \{\eta\in Diff(M);\, \eta^*(\omega) =\omega \}$ of smooth diffeomorphisms preserving a symplectic 2-form $\omega$ on $M$ is ILH-Lie group.

Let's remind concept of ILH-Lie group.
A topological vector space $\mathbb{E}$ is called an \emph{ILH-space} if $\mathbb{E}$ is the inverse limit of the Hilbert spaces $\{E^s\}$ enumerated  by  integers $s\geq d \geq 0$, and, moreover, $E^{s+1}$ is  linearly  and  densely  embedded in $E^s$.
Denote
\[
\mathbb{E}=\lim_{\leftarrow}E^s= \cap_{s\geq d}E^s .
\]
A system of vector spaces $\{\mathbb{E},E^s,  s\in \mathbb{N}(d)\}$, where $\mathbb{N}(d)=\{s\in\mathbb{N};\ s\geq d \geq 0 \}$, is also called a \emph{Sobolev chain}.

\begin{definition} [\cite{Omo5}] \label{Def2.5}
A topological group $G$ is called a strongly ILH-Lie group modeled on a chain
$\{\mathbb{E},E^s,  s\in \mathbb{N}(d)\}$
if  there  exists  a  system $\{G^s, s\in \mathbb{N}(d)\}$ of  topological  groups $G^s$ satisfying  the following conditions:

 {\rm (G1)}  every group $G^s$ is a smooth Hilbert manifold modeled on $E^s$;

 {\rm (G2)} $G^{s+1}$ is a dense subgroup in $G^s$, and the embedding $G^{s+1}\subset G^s$ is a mapping of class $C^{\infty}$;

 {\rm (G3)} $G=\cap\, G^s$ with the inverse limit topology;

 {\rm (G4)}  the  group  multiplication $G\times G \rightarrow G$, $(\eta,\zeta)\rightarrow \eta\zeta$ extends  to  a  mapping $G^{s+l}\times G^{s}\rightarrow G^{s}$ of class $C^l$;

 {\rm (G5)} the mapping $G\rightarrow G$, $\eta \rightarrow \eta^{-1}$  extends to a mapping $G^{s+l}\rightarrow G^{s}$ of class $C^l$;

 {\rm (G6)}  for each $\eta\in G^{s}$, the right translation $R_\eta:G^{s}\rightarrow G^{s}$ is a mapping of class $C^\infty$;

 {\rm (G7)}  let $T_e G^s$  be  the  tangent  space  of $G^s$ at  the  identity $e\in G^s$, and let $TG^s$ be  the  tangent  bundle. The mapping $dR: T_e G^{s+l}\times G^s \rightarrow TG^s$ defined by $dR(u,\eta)=dR_\eta u$ is a mapping of class $C^l$;

 {\rm (G8)}  there  exist  an  open  neighborhood $U$ of  zero  in $T_e G^d$ and  a $C^{\infty}$-diffeomorphism of $U$ onto  an  open    neighborhood $\widetilde{U}$ of  the  unity $e\in G^d$, $\varphi(0)=e$, such  that the  restriction  of $\varphi$ to $U\cap T_e G^s$  is  a $C^{\infty}$-diffeomorphism  of  the  open  subset $U\cap T_e G^s$ from $T_e G^s$ onto  an  open  subset $\widetilde{U}\cap G^s$ from $G^s$ for  any $s\geq d$.
\end{definition}

\textbf{Remark 1.}
Roughly speaking, condition {\rm (G8)} means that a coordinate neighborhood of the identity in each of the groups $G^s$ can be chosen independently of $s$.  Then, setting $T_e G=\cap\, T_e G^s$ with the inverse limit topology, we see that $\varphi:U\cap T_e G  \rightarrow  \widetilde{U}\cap G$ is a homeomorphism defining the Lie--Frech\'{e}t group structure on $G$.
\vspace{1mm}

The pair $(U,\varphi)$ in condition {\rm (G8)} is called the \emph{ILH-coordinates}  on $G$ in a neighborhood of the unity.

\begin{definition}  \label{Def2.7}
A topological group $G$ is called an ILH-Lie group if there exists a system of topological groups $\{G^s, s\in \mathbb{N}(d)\}$ satisfying conditions {\rm (G1)}--{\rm(G7)}.
\end{definition}

Omori  showed  in \cite{Omo1}, \cite{Omo5} that  the   group $\mathcal{D}$ of  smooth diffeomorphisms  of  a  compact  manifold $M$ is  a  strongly  ILH-Lie  group  modeled  on the space $\{\Gamma(TM),\Gamma^s(TM); s\in \mathbb{N}(\dim M+5)\}$ of smooth vector fields on $M$, where $\Gamma^s(TM)$ is the space of vector fields of Sobolev class of smoothness $H^s$.

In \cite{Eb-Mar}, Ebin and Marsden showed that the following groups are ILH-Lie groups:

   (1)  the group $\mathcal{D}_\mu$ of smooth diffeomorphisms preserving a volume element $\mu$ on the manifolds $M$;

   (2)  the group $\mathcal{D}_\omega$ of smooth diffeomorphisms preserving a symplectic structure $\omega$ on $M$.

\section{Symplectic diffeomorphisms group and bi-invariant metric} \label{Sympl-Diff-Gr}
Let $M$ be a smooth (of class $C^\infty$) compact orientable manifold of dimension $2n$ without boundary. The manifold $M$ is said to be \emph{symplectic} if a closed nondegenerate 2-form $\omega$ is given on it.
In this case, there exists an almost complex structure $J$ on $M$ having  the  following  properties:  $\omega(JX,JY)=\omega(X,Y)$, \ and $\omega(X,JX)>0$  for  any $X,Y \in \Gamma(TM)$.  The formula
\begin{equation}
g(X,Y)=\omega(X,JY), \qquad   X,Y \in \Gamma(TM), 
\label{g(X,Y)-10.1}
\end{equation}
defines an almost Hermitian structure $(J,g)$ on $M$ whose fundamental form is $\omega$. Therefore, $(M,\omega, J, g)$ is an almost K\"{a}hler structure on $M$.  Note that $\omega^n = n!\mu$, where $\mu$ is the Riemannian volume element on $M$.
The form $\omega$ defines the bundle isomorphism $\iota:TM \rightarrow T^*M$, $\iota(V)=-\iota_V \omega =\omega( ., V)$.

A transformation $\eta:M\rightarrow M$ is  said  to  be  \emph{symplectic}  if  it  preserves  the  symplectic  form $\omega$, i.e.,  if $\eta^*\omega = \omega$. Let $\mathcal{D}_\omega$ be  the  group  of  all  smooth  symplectic  diffeomorphisms  of  a  manifold $M$. Ebin and Marsden showed in \cite{Eb-Mar} that the group $\mathcal{D}_\omega$ is a closed ILH-Lie subgroup of the diffeomorphisms group $\mathcal{D}$.
Omori proved in \cite{Omo5} that $\mathcal{D}_\omega$ is a closed strongly ILH-Lie subgroup of the group $\mathcal{D}$.

The Lie algebra of the group $\mathcal{D}_\omega$ consists of all vector fields that infinitesimally preserve the form $\omega$, i.e., those vector fields $X$ on $M$ for which $L_X \omega = 0$, where $L_X = i_X \circ d + d \circ i_X$ is the Lie derivative.  Such vector fields are said to be \emph{locally Hamiltonian}.  Their characteristic property is that the form $i_X \omega =\omega(X,.)$ is closed.  Indeed,
$$
L_X w = i_X (d\omega) + d(i_X\omega)= d(i_X\omega)=0.
$$

A vector field $X$ on $M$ is said to be \emph{Hamiltonian} if the form $-i_X \omega$ is exact, i.e., when it is the differential of a  certain  function $F$ on $M$: $\omega(.,X )=dF$. The  function $F$     is  called  the  \emph{Hamiltonian function} of  the  field $X$, and in this case, the vector field $X$  is denoted by $X_F$. It well-known that $X_H=J \, \grad H$.

The  set  of  Hamiltonian  vector  fields  on $M$ form a Lie  algebra  with  respect  to  the  Lie  bracket of  vector  fields;  moreover, $[X_F,X_H]=X_{\{F,H\}}$,  where $\{F,H\}=\omega(X_F,X_H)$ is  the  Poisson  bracket  of functions $F$ and $H$ on the symplectic manifold $M$.

Denote by $\Gamma_\omega(TM)$ the Lie algebra of smooth locally Hamiltonian vector fields on $M$.  Let
$$
\Gamma_{\omega \partial}(TM)=\{X\in \Gamma_\omega(TM);\ i_X\omega - \mbox { is an exact form } \}.
$$
It  is  easy  to  see  that $\Gamma_{\omega \partial}(TM)$ is  an  ideal  of $\Gamma_\omega(TM)$, since $i_{[X,Y]}\omega=d(i_X(i_Y\omega))$ for $X,Y\in \Gamma_\omega(TM)$.
Since $\Gamma_\omega(TM)/\Gamma_{\omega \partial}(TM)=H^{1}(M)$, it follows  that $\Gamma_{\omega \partial}(TM)$ is of finite codimension in $\Gamma_\omega(TM)$. The ILH-Lie group corresponds to the ideal $\Gamma_{\omega \partial}(TM)$.

\begin{theorem} [see \cite{Omo5}, Theorem 8.5.1] \label{Th3.10}
There exists a strong ILH-subgroup $\mathcal{D}_{\omega\partial}$ of the group $\mathcal{D}_{\omega}$ such that $\Gamma_{\omega \partial}(TM)$ is its Lie algebra.
\end{theorem}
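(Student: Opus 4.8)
The plan is to realize $\mathcal{D}_{\omega\partial}$ concretely as the group of \emph{Hamiltonian diffeomorphisms}: those $\eta\in\mathcal{D}_\omega$ arising as the time-one map $\eta=\eta_1$ of an isotopy $\{\eta_t\}$ with $\eta_0=\Id$ solving $\frac{d}{dt}\eta_t=X_{H_t}\circ\eta_t$ for a smooth family of Hamiltonian functions $H_t$ on $M$. First I would check that this set is indeed a subgroup: the concatenation and reversal of Hamiltonian isotopies are again Hamiltonian (the generating fields of $\eta_t\circ\zeta_t$ and of $\eta_t^{-1}$ are again of the form $X_F$, using $[X_F,X_H]=X_{\{F,H\}}$ from the preface), and its tangent space at the identity is exactly $\Gamma_{\omega\partial}(TM)$, since differentiating a Hamiltonian isotopy at $t=0$ produces precisely the fields $X_H$ with $i_{X_H}\omega=-dH$ exact.

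The local ILH-structure I would obtain from the splitting induced by the cohomology map. Consider the bounded linear map $\Phi:\Gamma_\omega(TM)\to H^1(M;\R)$, $\Phi(X)=[i_X\omega]$. It is surjective: given $c\in H^1(M;\R)$ with closed representative $\alpha$, nondegeneracy of $\omega$ produces a unique field $X$ with $i_X\omega=\alpha$, whence $L_X\omega=d\alpha=0$, so $X\in\Gamma_\omega(TM)$ and $\Phi(X)=c$. Its kernel is exactly $\Gamma_{\omega\partial}(TM)$, so
\[
\Gamma_\omega(TM)=\Gamma_{\omega\partial}(TM)\oplus W,
\]
where $W$ is the finite-dimensional space of fields determined by the harmonic $1$-forms $\mathcal H^1$; crucially $W$ is a fixed space of smooth fields, hence the same complement works at every Sobolev level $s$. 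Restricting the ILH-coordinate chart $(U,\varphi)$ of $\mathcal{D}_\omega$ from condition {\rm (G8)} to the closed subspace $U\cap\Gamma_{\omega\partial}(TM)$ and its $H^s$-completions yields candidate charts $\varphi:U\cap\Gamma^s_{\omega\partial}(TM)\to\mathcal{D}^s_{\omega\partial}$, giving {\rm (G1)} and the $s$-independence demanded by {\rm (G8)}.

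With the charts in hand, conditions {\rm (G2)}--{\rm (G7)} I would inherit from those already known for $\mathcal{D}_\omega$: composition and inversion preserve the Hamiltonian condition and respect Sobolev classes, because exactness of $i_X\omega$ --- equivalently the vanishing of $\Phi$ --- is a closed condition that passes to the $H^s$-completions, so each group operation of $\mathcal{D}_\omega$ restricts to a $C^l$-map of the subgroup chain. The main obstacle, and the real content of the theorem, is the \textbf{integrability} step: one must show that the slice $\varphi(U\cap\Gamma_{\omega\partial}(TM))$ coincides (near $\Id$) with the abstractly defined subgroup, i.e.\ that the directions in the closed ideal $\Gamma_{\omega\partial}(TM)$ integrate to a submanifold of $\mathcal{D}_\omega$. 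This is exactly where the ideal property $i_{[X,Y]}\omega=d(i_X(i_Y\omega))$ and the finite-dimensionality of the complement $W$ enter, supplying the Frobenius-type involutivity needed for Omori's strong-ILH subgroup criterion. I would emphasize that the argument must stay \emph{local}: the global flux assignment $\{\eta_t\}\mapsto\int_0^1[i_{X_t}\omega]\,dt$ has image a subgroup of $H^1(M;\R)$ that need not be discrete, so one cannot simply realize $\mathcal{D}_{\omega\partial}$ as the kernel of a homomorphism onto a torus; the strong-ILH structure is built from the local chart and the subgroup it generates, which is why the splitting of $\Phi$, rather than its global integration, is the decisive ingredient.
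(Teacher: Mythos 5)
The paper itself offers no proof of this statement: it is quoted directly from Omori \cite{Omo5}, Theorem 8.5.1, so your proposal has to be judged against the construction carried out there (and in \cite{Eb-Mar}, \cite{Rat-Shm}). The decisive content of that construction is exactly the step you defer. You assert that restricting the {\rm (G8)}-chart $(U,\varphi)$ of $\mathcal{D}_\omega$ to $U\cap\Gamma_{\omega\partial}(TM)$ ``yields candidate charts,'' and then that the ideal property supplies ``the Frobenius-type involutivity needed for Omori's strong-ILH subgroup criterion,'' but you never state that criterion or verify its hypotheses; in the ILH category there is no Frobenius theorem available for free, and the uniform-in-$s$ estimates behind it are the analytic core of the theorem. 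Worse, the specific device you propose does not work as stated: an arbitrary chart satisfying {\rm (G8)} for $\mathcal{D}_\omega$, restricted to the subspace $\Gamma_{\omega\partial}(TM)$, need not map into the group you defined in your first paragraph. For the path $t\mapsto\varphi(tX_H)$ only the initial velocity is Hamiltonian; the velocity fields at later times are merely locally Hamiltonian, so the flux $\int_0^1[i_{X_t}\omega]\,dt$ need not vanish and the endpoint need not be a Hamiltonian diffeomorphism. Consequently the slice is not even a subset of $\mathcal{D}_{\omega\partial}$, and conditions {\rm (G1)}--{\rm (G8)} cannot simply be ``inherited'' as you claim; the remark that exactness of $i_X\omega$ ``is a closed condition that passes to the $H^s$-completions'' concerns tangent vectors, not group elements, and is circular before a submanifold structure is in place.

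What actually closes the gap --- and is the substance of Omori's Theorem 8.5.1, also visible in the Lagrangian-graph (Weinstein-type) charts used for $\mathcal{D}_\omega$ --- is a chart \emph{adapted} to the pair $(\mathcal{D}_\omega,\mathcal{D}_{\omega\partial})$: a neighborhood of the identity in $\mathcal{D}_\omega^s$ is parametrized by closed $1$-forms of Sobolev class $H^s$ near zero, and under this parametrization the subgroup directions correspond precisely to exact forms, so the Hodge splitting of closed forms into exact plus harmonic (your finite-dimensional complement $W$) gives {\rm (G8)} for the subgroup simultaneously for all $s$. Your preparatory observations are correct and relevant --- the map $\Phi(X)=[i_X\omega]$ is onto $H^1(M,\mathbb{R})$ with kernel $\Gamma_{\omega\partial}(TM)$ and a fixed smooth complement, and the flux group need not be discrete, so the construction must be local rather than realizing $\mathcal{D}_{\omega\partial}$ as the kernel of a globally defined homomorphism --- but as written the proposal replaces the theorem's analytic core by an appeal to an unstated criterion applied through a chart that does not have the required property, so it does not constitute a proof.
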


The connected component of the identity of the group $\mathcal{D}_{\omega\partial}$ is called the  \emph{Hamiltonian  transformation} group of the manifold $M$ and is denoted by $\mathcal{D}_{\omega H}$ in what follows. The Lie algebra of the group $\mathcal{D}_{\omega H}$  is $\Gamma_{\omega \partial}(TM)$.

\vspace{1mm}
\textbf{Remark 2.}
Ebin and Marsden  showed  in  \cite{Eb-Mar} that  the  connected  component $\mathcal{D}_{0}$ of  the  diffeomorphism group of the manifold $M$ is diffeomorphic to $\mathcal{D}_{\mu}\times \mathcal{V}$, the direct product of the group $\mathcal{D}_{\mu}$ of volume-preserving
diffeomorphisms and the convex space $\mathcal{V}$ of volume elements. In the symplectic case, this fact does not  hold. McDuff presented examples \cite{McD3} of cases where the group $\mathcal{D}_{\omega}$ is not homotopy equivalent to the group $\mathcal{D}_{0}$.
\vspace{1mm}

{\bf Bi-invariant metric on $\mathcal{D}_\omega$}\\
Let $X,Y\in T_e\mathcal{D}_{\omega}= \Gamma_\omega(TM)$.
The right-invariant weak Riemannian structure on the group $\mathcal{D}_\omega$ is defined by
\begin{equation}
(X,Y)_e = \int_M g(X,Y)d\mu.  
\label{Weak-Riem-D-omega-10.2}
\end{equation}

Let $X=X_H$ be a Hamiltonian vector field on $M$.
For the Hamiltonian $H$ to be uniquely defined by its vector field $X$, we assume that
$$
\int_M H(x)d\mu(x)= 0.
$$
Denote  by $C_0^\infty(M,\mathbb{R})$ the  space of  such  functions  on $M$. The  Laplacian  $\Delta=- \div \circ \grad$ defines  the isomorphism $\Delta: C_0^\infty(M,\mathbb{R}) \rightarrow C_0^\infty(M,\mathbb{R})$. Then  the  operator $\Delta^{-1}$ inverse  to  the  Laplacian  is  defined.
Let $H^s_0(M)$ be the completion of the space $C_0^\infty(M,\mathbb{R})$ with respect to the $H^s$-norm of the Sobolev space, $s\geq  2n+5$. The operator $\Delta$ extends to a Hilbert space isomorphism $\Delta:H^s_0(M) \rightarrow H^{s-2}_0(M)$.

Consider  the  Lie  algebra $\Gamma_{\omega \partial}(TM)$ of  smooth  Hamiltonian  vector  fields  on $M$. In \cite{Omo5} and \cite{Rat-Shm},  it  was  shown that  there  exists  a  connected  ILH-Lie  group $\mathcal{D}_{\omega H}$ whose  Lie  algebra is $\Gamma_{\omega \partial}(TM)$. If  the  class $[\omega]\in H^2(M,\mathbb{R})$ is integral,  then  the  group $\mathcal{D}_{\omega H}$ is  (see \cite{Rat-Shm})  the  commutator $[\mathcal{D}_{\omega 0},\mathcal{D}_{\omega 0}]$ for  the  connected  component  $\mathcal{D}_{\omega 0}$ of  the  identity  of  the  group  $\mathcal{D}_\omega$.  If  the  first  cohomology  group  is  trivial, $H^1(M,\mathbb{R})=0$,  then $\mathcal{D}_{\omega H} = \mathcal{D}_{\omega 0}$. 

Define the inner product on the algebra $\Gamma_{\omega \partial}(TM)$ by
\begin{equation}
\left<X_F,X_H\right>_e=\int_M F(x)H(x)d\mu(x).    
\label{(XF,XH)-D-omega-10.3}
\end{equation}
It is easy to see that this inner product is bi-invariant, or, in other words, it is invariant with respect to the adjoint action of the group $\mathcal{D}_{\omega}$ on the algebra $\Gamma_{\omega \partial}(TM)$.  Indeed, for $\eta \in \mathcal{D}_\omega$, we have $Ad_\eta
X_F = dL_\eta dR_\eta^{-1} X_F = X_{F\circ \eta^{-1}}$ and $\eta^*\muь=\eta^*(c\omega^n)= c(\eta^*\omega)^n=c\omega^n=\mu$, where $c=1/{n!}$. The infinitesimal variant of the invariance of the inner product (\ref{(XF,XH)-D-omega-10.3}) has the form
\begin{equation}
\left<[X_G,X_F],X_H\right>_e+\left<X_F,[X_G,X_H]\right>_e =0, 
\label{(Eq-10.4}
\end{equation}
where $[X_G,X_F]$ is the Lie bracket of Hamiltonian vector fields $X_G$ and $X_F$ on $M$.  Recall that $[X_G,X_F]=X_{\{G,F\}}$, where $\{G,F\}$ is  the  Poisson  bracket  of  functions  $G$ and $F$ on  a  symplectic  manifold.
The  bi-invariant metric (\ref{(XF,XH)-D-omega-10.3}) was introduced in \cite{Smo12}, and it was also considered in \cite{Hof}, \cite{El-Ra1} and \cite{LaM}.

\begin{theorem} [see \cite{Smo12}] \label{Th10.1}
The inner product (\ref{Weak-Riem-D-omega-10.2}) is expressed through the bi-invariant inner product (\ref{(XF,XH)-D-omega-10.3}) on $\Gamma_{\omega \partial}(TM)$ as follows:
\begin{equation}
(X_F,X_H)_e= \left<X_{\Delta F}, X_H\right>_e, 
\label{(Eq-10.5}
\end{equation}
where $\Delta= -\div\circ\grad$ is the Laplace operator.
\end{theorem}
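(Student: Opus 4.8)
The plan is to reduce the integral identity (\ref{(Eq-10.5}) to a single pointwise identity of integrands followed by one integration by parts. Concretely, I would first establish, at every point of $M$, the equality
\begin{equation*}
g(X_F,X_H)=g(\grad F,\grad H),
\end{equation*}
and then integrate over $M$ and apply Green's formula. Note that this pointwise formula is manifestly symmetric in $F$ and $H$, which is a reassuring consistency check since the left-hand side $\int_M g(X_F,X_H)\,d\mu$ is symmetric and self-adjointness of $\Delta$ will make the right-hand side symmetric as well.

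For the pointwise step I would use the two structural facts recalled in the excerpt: the Hamiltonian field satisfies $X_H=J\,\grad H$, and the almost complex structure obeys $J^2=-\Id$, so $JX_H=J^2\grad H=-\grad H$. Starting from $g(X,Y)=\omega(X,JY)$ I then compute
\begin{equation*}
g(X_F,X_H)=\omega(X_F,JX_H)=-\omega(X_F,\grad H)=\omega(\grad H,X_F),
\end{equation*}
the last step by antisymmetry of $\omega$. The defining relation of the Hamiltonian field, $\omega(\,\cdot\,,X_F)=dF$, gives $\omega(\grad H,X_F)=dF(\grad H)=g(\grad F,\grad H)$, where the final equality is just the definition of the gradient with respect to $g$. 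This establishes the claimed pointwise identity. (One verifies the compatibility of conventions here from $\omega(V,J\,\grad F)=g(V,\grad F)=dF(V)$, i.e.\ $X_F=J\,\grad F$.)

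It remains to integrate. Since $M$ is compact without boundary, the divergence theorem applied to $\div(H\,\grad F)=H\,\div(\grad F)+g(\grad H,\grad F)$ yields Green's identity
\begin{equation*}
\int_M g(\grad F,\grad H)\,d\mu=-\int_M (\div\grad F)\,H\,d\mu=\int_M (\Delta F)\,H\,d\mu,
\end{equation*}
using $\Delta=-\div\circ\grad$. By the definition (\ref{(XF,XH)-D-omega-10.3}) of the bi-invariant product, the right-hand side is exactly $\left<X_{\Delta F},X_H\right>_e$, combined with the pointwise identity this gives (\ref{(Eq-10.5}). I do not anticipate a genuine obstacle: the argument is entirely local and algebraic apart from a textbook integration by parts, and the only real care required is in the sign bookkeeping linking $J$, $\omega$, $\grad$ and the assignment $F\mapsto X_F$, all of which must remain consistent with $X_F=J\,\grad F$.
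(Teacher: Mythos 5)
Your proof is correct and follows essentially the same route as the paper: reduce $g(X_F,X_H)$ pointwise to $g(\grad F,\grad H)$ and then integrate by parts against $H$ using $\Delta=-\div\circ\grad$. The only cosmetic difference is that the paper gets the pointwise identity directly from $g(J\,\grad F,J\,\grad H)=g(\grad F,\grad H)$, whereas you derive it through $\omega$ and the defining relation $\omega(\,\cdot\,,X_F)=dF$; both are consistent with the stated conventions.
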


\begin{proof}
Indeed, we have
$$
(X_F,X_H)_e=\int_M g(X_F,Y_H) d\mu =\int_M g(J\, \grad F, J\, \grad H) d\mu =
$$
$$
= \int_M g(\grad F, \grad H) d\mu =\int_M -\div (\grad F)\, H\, d\mu =\int_M (\Delta F) H\, d\mu.
$$
\end{proof}

We can extend inner product (\ref{(XF,XH)-D-omega-10.3}) defined on the tangent space $T_e \mathcal{D}_{\omega H}=\Gamma_{\omega \partial}(TM)$ at the identity to the whole group $\mathcal{D}_{\omega H}$ using right translations.  The invariance of form (\ref{(XF,XH)-D-omega-10.3}) implies the bi-invariance of the weak Riemannian structure on $\mathcal{D}_{\omega H}$ being obtained.

For the group $\mathcal{D}_{\omega H}$, there exists (see \cite{Omo5}) a system $\{\mathcal{D}_{\omega H}^s;\ s\geq 2n+5\}$ of topological groups $\mathcal{D}_{\omega H}^s$ satisfying properties (G1) -- (G7) above. Each  of  the  topological  groups $\mathcal{D}_{\omega H}^s$ is a smooth  Hilbert  manifold modeled  on  the  space $\Gamma_{\omega \partial}(TM)^s$ of  Hamiltonian  vector  fields of Sobolev  class  of  smoothness $H^s$.
Therefore the tangent space $T_e \mathcal{D}_{\omega H}^s$ at the identity is identified with $\Gamma_{\omega \partial}(TM)^s$. According to property (G6) of an ILH-Lie group,  the  right  translation $R_\eta :\mathcal{D}_{\omega H}^s \rightarrow \mathcal{D}_{\omega H}^s$ is a smooth  mapping  for  any $\eta \in \mathcal{D}_{\omega H}^s$. This  allows  us  to  obtain  the right-invariant weak Riemannian structure on $\mathcal{D}_{\omega H}^s$ from the inner product (\ref{(XF,XH)-D-omega-10.3}) on $\Gamma_{\omega \partial}(TM)^s =T_e\mathcal{D}_{\omega H}^s$.

\begin{theorem} [see \cite{Smo12}]  \label{Th10.2}
The weak Riemannian structure on the smooth Hilbert manifold $\mathcal{D}_{\omega H}^s$ obtained
from  the  inner  product (\ref{(XF,XH)-D-omega-10.3}) on $T_e\mathcal{D}_{\omega H}^s$ by using right  translations  on $\mathcal{D}_{\omega H}^s$ is  smooth. The  corresponding  weak Riemannian structure on the group $\mathcal{D}_{\omega H}=\cap\, \mathcal{D}_{\omega H}^s$ is bi-invariant and ILH-smooth.
\end{theorem}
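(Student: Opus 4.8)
The plan is to prove the two assertions in turn: first, that for each fixed $s$ the right-translated form is a smooth $(0,2)$-tensor field on the Hilbert manifold $\mathcal{D}_{\omega H}^s$, and second, that passing to the inverse limit yields a bi-invariant ILH-smooth weak metric on $\mathcal{D}_{\omega H}$. Throughout I would write $h$ for the Hamiltonian-recovery operator $h(W)=\Delta^{-1}\,\div(JW)$, a fixed bounded linear operator that gains one derivative, $h:H^{r}\to H^{r+1}_0$, and that satisfies $X_{h(W)}=W$ whenever $i_W\omega$ is exact. By construction the right-invariant extension is $\langle U,V\rangle_\eta=\langle dR_{\eta^{-1}}U,\,dR_{\eta^{-1}}V\rangle_e=\int_M h(U\circ\eta^{-1})\,h(V\circ\eta^{-1})\,d\mu$ for $U,V\in T_\eta\mathcal{D}_{\omega H}^s$.

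The hard part is that the map $\eta\mapsto dR_{\eta^{-1}}U=U\circ\eta^{-1}$ is only continuous, the derivative costing one order of regularity (this reflects the loss of derivatives already present in (G5) and (G7)); hence the formal constancy of the metric in the right trivialization does not by itself give smoothness. To circumvent this I would localize the expression by a change of variables. Using that $\eta$ is symplectic and therefore $\mu$-preserving, substitute $x\mapsto\eta(x)$ and note, for $W=U\circ\eta^{-1}$, that $d(F\circ\eta)=\eta^*dF=-i_{\eta^*W}\omega$ with $\eta^*W=(d\eta)^{-1}U$. Writing $\tilde U=(d\eta)^{-1}U$, a pointwise order-zero operation on $U$, one obtains the localized formula $\langle U,V\rangle_\eta=\int_M h(\tilde U)\,h(\tilde V)\,d\mu$, in which the nonlocal right translation has been replaced by the pointwise bundle map $U\mapsto(d\eta)^{-1}U$ followed by the fixed smoothing operator $h$.

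Smoothness on $\mathcal{D}_{\omega H}^s$ then follows by bookkeeping of regularities. The map $\eta\mapsto(d\eta)^{-1}$ is smooth from $\mathcal{D}_{\omega H}^s$ into the $H^{s-1}$ fields of fibre isomorphisms, being the composition of the bounded linear differentiation $\eta\mapsto d\eta\in H^{s-1}$ with pointwise matrix inversion, a smooth substitution operator on $H^{s-1}$ for $s-1>\dim(M)/2$ (the $\omega$-lemma of Ebin--Marsden). Multiplication $H^{s-1}\times H^s\to H^{s-1}$ is bounded bilinear, so $\eta\mapsto\bigl(U\mapsto\tilde U\bigr)$ is smooth into the bounded operators $H^s\to H^{s-1}$; post-composing with the fixed bounded $h:H^{s-1}\to H^s$ gives a smooth map $\eta\mapsto T(\eta):=h\circ((d\eta)^{-1}\cdot)$ into the bounded operators $H^s\to H^s$. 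Here the one-derivative gain of $h$ exactly compensates the one-derivative loss in $(d\eta)^{-1}\in H^{s-1}$, which is precisely why the Hamiltonian inner product (as opposed to a zeroth-order one) produces a genuinely smooth, not merely continuous, tensor field. Since $\langle U,V\rangle_\eta=\langle T(\eta)U,\,T(\eta)V\rangle_{L^2}$ and the $L^2$ pairing is a bounded symmetric bilinear form, $\eta\mapsto\langle\cdot,\cdot\rangle_\eta$ is a smooth section of symmetric bilinear forms, i.e.\ a smooth weak Riemannian metric on $\mathcal{D}_{\omega H}^s$; its weakness, namely boundedness already in norms lower than $H^s$, is visible from the same formula.

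Finally, for the group $\mathcal{D}_{\omega H}=\cap_s\mathcal{D}_{\omega H}^s$: the localized formula is manifestly independent of $s$ and compatible with the dense inclusions $\mathcal{D}_{\omega H}^{s+1}\subset\mathcal{D}_{\omega H}^s$, so the metrics fit together and define an ILH-smooth weak metric on the inverse limit. Right-invariance holds by construction, while $\Ad$-invariance of $\langle\cdot,\cdot\rangle_e$ — recorded above in infinitesimal form and following from $\Ad_\eta X_F=X_{F\circ\eta^{-1}}$ together with $\eta^*\mu=\mu$ — upgrades right-invariance to left-invariance; hence the metric is bi-invariant on $\mathcal{D}_{\omega H}$. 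The single genuine obstacle is the one identified in the second paragraph, namely promoting the formally constant right-invariant form to an honestly smooth tensor in the presence of the derivative loss in composition, and the change of variables together with the smoothing operator $h$ is exactly what overcomes it.
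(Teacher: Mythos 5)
Your proof is essentially correct, but there is nothing in this paper to compare it with line by line: Theorem \ref{Th10.2} is stated with a reference to \cite{Smo12}, and the surrounding text only records the construction (right translations on $\mathcal{D}_{\omega H}^s$ are smooth by property (G6)) together with the remark that the $\Ad$-invariance of the product (\ref{(XF,XH)-D-omega-10.3}) implies bi-invariance of the resulting structure. What you supply is precisely the analytic content the paper delegates to the cited source, and your route is the natural Ebin--Marsden one. Your key step is sound: since the right trivialization $\eta\mapsto U\circ\eta^{-1}$ is only continuous, you eliminate the composition by the change of variables $x\mapsto\eta(x)$, legitimate because $\eta^*\omega=\omega$ forces $\eta^*\mu=\mu$; the identity $F\circ\eta=h\bigl((d\eta)^{-1}U\bigr)$ with $h=\Delta^{-1}\div(J\,\cdot)$ is verified correctly (the mean-zero normalization survives because $\eta$ preserves $\mu$), the one-derivative gain of $h$ exactly offsets the loss in $(d\eta)^{-1}\in H^{s-1}$, and the $\omega$-lemma gives smoothness of $\eta\mapsto(d\eta)^{-1}$, so the metric is a smooth weak $(0,2)$-tensor on each $\mathcal{D}_{\omega H}^s$; bi-invariance then follows, as you argue and as the paper itself notes, from right invariance plus $\Ad_\eta X_F=X_{F\circ\eta^{-1}}$ and $\eta^*\mu=\mu$. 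The only points you pass over are routine: the smoothness claim must be read in charts (tangent vectors at $\eta$ are $H^s$ fields along $\eta$, so the model space varies with $\eta$), one needs that $T\mathcal{D}_{\omega H}^s$ is a smooth subbundle of $T\mathcal{D}^s$ restricted to $\mathcal{D}_{\omega H}^s$ (\cite{Eb-Mar}, \cite{Omo5}), and ``ILH-smooth'' for the metric on $\cap_s\mathcal{D}_{\omega H}^s$ means exactly the level-by-level smoothness and compatibility with the inclusions that you establish; none of this affects correctness.
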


\section{Euler equation}\label{Euler-eq-10.1}

On the algebra $\Gamma_{\omega \partial}(TM)$ of Hamiltonian vector fields, there exist the bi-invariant inner product (\ref{(XF,XH)-D-omega-10.3}) and the kinetic energy function $L$,
\begin{equation}
L(X_F)= \frac 12(X_F,X_F)_e=\frac 12\int_M g(X_F,X_F) d\mu,\quad
X_F\in \Gamma_{\omega \partial}(TM). 
\label{(L(XF)-Eq-10.6}
\end{equation}
The function $L$ is written as follows through the invariant inner product (\ref{(XF,XH)-D-omega-10.3}):
$$
L(X_F)= \frac 12 \left<X_F,X_{\Delta F}\right>_e.
$$
The Legendre transform \cite{Abr-Mar} corresponding to the function $L$ has the form $Y_F = X_{\Delta F}$. Then the Hamiltonian function has the expression $H(Y_F)= L(X_F)=\frac
12\left<Y_{\Delta^{-1}F},Y_F\right>_e$. It is easy to calculate the gradient of the function $H(Y_F)$ with respect to the invariant inner product (\ref{(XF,XH)-D-omega-10.3}):
$$
\grad H(Y_F)=Y_{\Delta^{-1}F}.
$$
Recall that the operator $\Delta:\Gamma_{\omega \partial}(TM) \rightarrow \Gamma_{\omega \partial}(TM)$ is an isomorphism, and, therefore, it has the inverse operator $\Delta^{-1}$.

Following the general construction proposed in \cite{Mi-Fo}, we write the Euler equation
\begin{equation}
\frac {d}{dt}Y_F= [Y_F, \grad H(Y_F)]=[Y_F,Y_{\Delta^{-1}F}]. 
\label{(Euler-Eq-10.7}
\end{equation}
Passing from the vector fields to their Hamiltonians, we arrive at the Euler equation of the form $\frac {\partial F}{\partial t}= \{F,\Delta^{-1}F\}$, where $\{ , \}$ is the Poisson bracket on the symplectic manifold $M$.  Make the change $F :=\Delta^{-1}F$ then
\begin{equation}
\frac {\partial }{\partial t}\Delta F= \{\Delta F,F\}. 
\label{(Euler-Eq-10.8}
\end{equation}
We consider the Euler equation precisely in this form.

\vspace{1mm}
\textbf{Remark 3.}  For $n=1$, the  Euler  equation  coincides  with  the  Helmholtz  equation  of  motion  of  the two-dimensional ideal incompressible fluid.
\vspace{1mm}

\begin{theorem} [see \cite{Smo12}] \label{Th10.3}
For any function $F_0(x)\in H_0^s(M)$, $s\geq 2n+7$, there exists a unique continuous  solution $F(x,t)$ of  Eq. (\ref{(Euler-Eq-10.8}) defined  on $(-\varepsilon,\varepsilon)\times M$ for a certain $\varepsilon > 0$ and  having  the  following properties :

{\rm (1)} \ $F(0,x)=F_0(x)$;

{\rm (2)} if $F_0(x)\in H_0^{s+l}(M),\ l\geq 0$, then $F(t,x)\in H_0^{s+l}(M)$ for any $t\in (-\varepsilon,\varepsilon)$;

{\rm (3)}  the  flow $\eta_t$ on $M$ generated  by  the  Hamiltonian  vector  field $X_{F(t,x)}$ is a geodesic on $\mathcal{D}_{\omega H}^s$ of  the right-invariant metric (\ref{Weak-Riem-D-omega-10.2}). Conversely, if $\eta_t$ is a geodesic on $\mathcal{D}_{\omega H}^s$, then the velocity field
$$
X_F= dR_{\eta_t}^{-1}(\frac {d}{dt}\eta_t)
$$
has the Hamiltonian $F$ satisfying Eq. (\ref{(Euler-Eq-10.8}).
\end{theorem}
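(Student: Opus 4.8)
The plan is to follow the Lagrangian (flow--map) strategy of Ebin and Marsden \cite{Eb-Mar}, reducing the analytically delicate Eulerian equation (\ref{(Euler-Eq-10.8}) to an ordinary differential equation with a \emph{smooth} right-hand side on a Hilbert manifold, where Picard's existence and uniqueness theorem applies directly. The starting observation is the one already implicit in the derivation preceding the statement: the reduction from (\ref{(Euler-Eq-10.7}) to (\ref{(Euler-Eq-10.8}), carried out in the spirit of \cite{Mi-Fo}, identifies (\ref{(Euler-Eq-10.8}) with the geodesic equation of the right-invariant weak metric (\ref{Weak-Riem-D-omega-10.2}) written in the body frame. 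Concretely, a curve $F(t,\cdot)$ solves (\ref{(Euler-Eq-10.8}) if and only if the flow $\eta_t$ of the time-dependent field $X_{F(t)}$ is a geodesic of (\ref{Weak-Riem-D-omega-10.2}) on $\mathcal{D}_{\omega H}^s$, since $X_{F(t)}=dR_{\eta_t}^{-1}\dot\eta_t$ is exactly the spatial velocity of $\eta_t$. Thus it suffices to produce the geodesic through $e$ with initial velocity $X_{F_0}$ and then set $X_{F(t)}=dR_{\eta_t}^{-1}\dot\eta_t$; property (1) holds by construction.

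The geodesic equation is the flow of the geodesic spray $S$, a vector field on the tangent bundle $T\mathcal{D}_{\omega H}^s$, whose existence as a smooth object is the whole issue (Theorem~\ref{Th10.2} guarantees only that the metric itself is smooth). I would exhibit $S$ explicitly. Since (\ref{Weak-Riem-D-omega-10.2}) is the restriction to $\mathcal{D}_{\omega H}^s$ of the $L^2$-metric on the full group $\mathcal{D}^s$, whose geodesic spray $S_0$ is the pointwise lift of the geodesic spray of $(M,g)$ (so that $t\mapsto\eta_t(x)$ is an $M$-geodesic for each $x$), the geodesics of the induced metric are the curves whose $L^2$-covariant acceleration is $g$-orthogonal to the submanifold, i.e. $P_{\eta_t}\big(\tfrac{D}{dt}\dot\eta_t\big)=0$, where $P_\eta$ is the $L^2$-orthogonal projection onto the tangent space $dR_\eta\big(\Gamma_{\omega\partial}(TM)^s\big)$. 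Using the isomorphism $\iota:TM\to T^*M$ and the Hodge decomposition of the $1$-form $i_V\omega$ into its exact, coexact, and harmonic parts, the projection onto Hamiltonian fields at the identity is a zeroth-order operator, bounded on every $H^s$, and $P_\eta=dR_\eta\circ P\circ dR_{\eta^{-1}}$ is its conjugate by right translation.

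The heart of the argument, and the step I expect to be the main obstacle, is to show that $S$ is a $C^\infty$ vector field on $T\mathcal{D}_{\omega H}^s$ with no loss of derivatives. Formally the Eulerian form misbehaves: the nonlinearity $\{\Delta F,F\}$ involves three spatial derivatives of $F$ while the left-hand side $\partial_t\Delta F$ carries only two, so (\ref{(Euler-Eq-10.8}) is not an ordinary differential equation in any fixed Sobolev space $H^s$. The resolution, exactly as in \cite{Eb-Mar}, is that in the Lagrangian picture right translation absorbs this loss: the composition maps $v\mapsto(\nabla_v v)\circ\eta$ and $W\mapsto W\circ\eta$ are smooth on $H^s$ by the $\omega$-lemma for $s\geq 2n+5$, and the projector $P_\eta=dR_\eta\circ P\circ dR_{\eta^{-1}}$, being a fixed bounded operator conjugated by the smooth right-translation maps, depends smoothly on $\eta\in\mathcal{D}_{\omega H}^s$ without derivative loss. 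Granting this, $S$ is smooth, and Picard's theorem for integral curves of a $C^\infty$ vector field on a Hilbert manifold yields a unique geodesic $\eta_t$ on a maximal interval $(-\varepsilon,\varepsilon)$, with $\varepsilon$ controlled by $\|X_{F_0}\|_{H^s}$.

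Finally I would read off (2) and the converse in (3). Because $S$ is smooth and tangent to each of the nested submanifolds $T\mathcal{D}_{\omega H}^{s+l}\subset T\mathcal{D}_{\omega H}^s$, and because the existence time depends only on the weaker $H^s$-norm, the integral curve through a point of $T\mathcal{D}_{\omega H}^{s+l}$ stays in $\mathcal{D}_{\omega H}^{s+l}$ on the \emph{same} interval $(-\varepsilon,\varepsilon)$; transcribing through $X_{F(t)}=dR_{\eta_t}^{-1}\dot\eta_t$ gives (2), and passing to the inverse limit over $l$ produces the smooth solution for $C^\infty$ data and the ILH-smoothness of $\eta_t$. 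For the converse direction of (3), the assignment $\eta_t\mapsto X_F=dR_{\eta_t}^{-1}\dot\eta_t$ is precisely the inverse of the correspondence used above, so every geodesic of (\ref{Weak-Riem-D-omega-10.2}) on $\mathcal{D}_{\omega H}^s$ yields a velocity field whose Hamiltonian satisfies (\ref{(Euler-Eq-10.8}), and uniqueness of integral curves of $S$ gives uniqueness of $F$.
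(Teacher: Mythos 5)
Your overall strategy is the right one: the paper itself gives no proof of Theorem~\ref{Th10.3} (it defers to \cite{Smo12}), and the argument there, like yours, is the Lagrangian reduction of Ebin--Marsden \cite{Eb-Mar}: identify Eq.~(\ref{(Euler-Eq-10.8}) with the geodesic equation of the right-invariant metric (\ref{Weak-Riem-D-omega-10.2}) on $\mathcal{D}_{\omega H}^s$, show that the geodesic spray is a smooth vector field on the tangent bundle, and apply Picard together with the regularity argument to get (1)--(3). However, the justification you give for the central smoothness claim does not work as stated. You assert that $P_\eta = dR_\eta\circ P_e\circ dR_{\eta^{-1}}$ depends smoothly on $\eta$ ``being a fixed bounded operator conjugated by the smooth right-translation maps.'' But the dependence on $\eta$ enters through $\eta^{-1}$, and on a fixed Sobolev level inversion is only continuous (it is $C^l$ only as a map $\mathcal{D}^{s+l}\to\mathcal{D}^s$, by property (G5)); likewise the joint composition map $(v,\eta)\mapsto v\circ\eta$ is not smooth on $H^s\times\mathcal{D}^s$. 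So ``conjugation by smooth maps'' is precisely the statement that has to be proved, not a reason. The actual resolution in \cite{Eb-Mar} (and in Omori's framework \cite{Omo5}) is to rewrite $P_\eta$ as the solution operator of an elliptic problem whose coefficients depend polynomially on $D\eta$ and $(D\eta)^{-1}$ --- here, the Hodge decomposition of the $1$-form $i_V\omega$ transported by $\eta$, using that $V\mapsto i_V\omega$ is an isometry for the compatible metric (\ref{g(X,Y)-10.1}) --- and then invoke the theorem on smooth dependence of solutions of elliptic equations on such coefficients (the Appendix of \cite{Eb-Mar}, or Omori's smooth extension theorems). Without this ingredient your spray $S$ has not been shown to be $C^\infty$, and the Picard step has nothing to act on; this is the heart of the theorem, and your proposal only points at it.

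A second, smaller gap is in property (2). Tangency of $S$ to the nested bundles $T\mathcal{D}_{\omega H}^{s+l}\subset T\mathcal{D}_{\omega H}^s$ gives local $H^{s+l}$-solutions, but it does not by itself give them on the \emph{same} interval $(-\varepsilon,\varepsilon)$ determined at the $H^s$ level: a priori the $H^{s+l}$-norm could blow up earlier. The uniformity of the existence time in $l$ is a theorem in \cite{Eb-Mar} whose proof uses the right-invariance of the spray (a no-loss-of-derivatives bootstrap along the flow, essentially conservation of the $H^s$-data transported by $\eta_t$, as reflected in the identity $\Delta F(t,x)=(\Delta F_0)(\eta_t^{-1}(x))$ recorded after the theorem); you assert it as if it followed automatically. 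With these two points supplied --- the elliptic smooth-dependence lemma for $\eta\mapsto P_\eta$ and the invariance argument for the uniform interval --- your outline closes and coincides with the proof in the cited source; also note the index bookkeeping $F\in H_0^s$ versus $X_F\in\Gamma_{\omega\partial}(TM)^{s-1}$, which is where the hypothesis $s\geq 2n+7$ rather than $2n+5$ is used.
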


We write the Euler equation $\frac {\partial }{\partial t}\Delta F = \{\Delta F,F\}$ through vector fields in the form
$$
\frac {\partial }{\partial t}X_{\Delta F}= \left[X_{\Delta F},X_F\right]= -L_{X_F}X_{\Delta F}.
$$
Therefore, the vector field $X_{\Delta F}$ on $M$ is transported by the flow $\eta_t$ of the vector field $X_F$. In other words, $X_{\Delta F}(t,x)= dL_{\eta_t} dR_{\eta_t}^{-1} (X_{\Delta F})$. Since $Ad_{\eta_t} X_{H(t)}=X_{H(\eta_t^{-1}(x))}$, the  Hamiltonian $\Delta F(t,x)$ is  transported by the flow $\eta_t$:
$$
\Delta F(t,x)=(\Delta F)(\eta_t^{-1}(x)).
$$
This immediately implies the following theorem.

\begin{theorem} [see \cite{Smo12}] \label{Th10.4}
Let $F= F(t,x)$ be a solution of the Euler equation (\ref{(Euler-Eq-10.8}), and let $\eta_t$ be the flow on $M$ generated by the vector field $X_F$. Then the following quantities are independent of time $t$:
\begin{equation}
L=\frac 12\left(X_F,X_F \right)_e=\frac 12\int_M F\Delta F\ d\mu, 
\label{(L-Eq-10.9}
\end{equation}
\begin{equation}
I_k=\int_M \left(\Delta F\right)^k\ d\mu. 
\label{(Ik-Eq-10.10}
\end{equation}
\end{theorem}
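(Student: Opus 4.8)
The plan is to treat the two families of conserved quantities separately, since they reflect two different mechanisms: the $I_k$ are Casimir-type invariants that follow from transport, whereas $L$ is the Hamiltonian (kinetic energy) of the system.

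First, for the quantities $I_k$, I would start from the transport formula $\Delta F(t,x)=(\Delta F)(\eta_t^{-1}(x))$ established just before the statement. Raising it to the $k$-th power gives $(\Delta F(t,x))^k=\big((\Delta F)(\eta_t^{-1}(x))\big)^k$, so that $I_k(t)=\int_M\big((\Delta F)\circ\eta_t^{-1}\big)^k\,d\mu$. The decisive fact is that every $\eta_t\in\mathcal{D}_\omega$ preserves the Riemannian volume, since $\eta_t^*\mu=\eta_t^*(\tfrac1{n!}\omega^n)=\tfrac1{n!}(\eta_t^*\omega)^n=\tfrac1{n!}\omega^n=\mu$. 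Hence the change of variables $y=\eta_t^{-1}(x)$ leaves the integral unchanged, and $I_k(t)=\int_M(\Delta F(0,y))^k\,d\mu(y)=I_k(0)$. This is the part that "immediately follows". (Equivalently, one can differentiate and use that $X_{\Delta F}$ is divergence-free together with $X_{\Delta F}(\Delta F)=\{\Delta F,\Delta F\}=0$.)

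Next, for $L$, I would deliberately avoid the transport argument: writing $G=\Delta F$ we have $L=\tfrac12\int_M F\,G\,d\mu$ with $F=\Delta^{-1}G$, and $\Delta^{-1}$ depends on the metric $g$, which $\eta_t$ need not preserve even though it preserves $\omega$ and $\mu$. Instead I would differentiate directly. Using that $\Delta$ is $L^2$-self-adjoint and $\partial_t(\Delta F)=\Delta(\partial_t F)$, the two cross terms coincide, giving $\tfrac{dL}{dt}=\int_M F\,\partial_t(\Delta F)\,d\mu=\int_M F\,\{\Delta F,F\}\,d\mu$ after substituting the Euler equation (\ref{(Euler-Eq-10.8}). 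Since $\{G,F\}=X_G(F)$ and $F\,X_G(F)=\tfrac12 X_G(F^2)$, this reduces to $\tfrac12\int_M X_G(F^2)\,d\mu$, which vanishes because the Hamiltonian field $X_G=X_{\Delta F}$ is divergence-free, so $\int_M X_G(\phi)\,d\mu=0$ for every $\phi$. Conservation of $L$ also follows at once from Theorem \ref{Th10.3}, since $\eta_t$ is a geodesic of the right-invariant metric and kinetic energy is constant along any geodesic.

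I expect the only genuine subtlety to be the conservation of $L$. Because a symplectomorphism need not be a $g$-isometry, the clean transport argument that handles $I_k$ does not apply to the quadratic form $L$; one must instead exploit the self-adjointness of $\Delta$ and the divergence-free nature of Hamiltonian fields (or, structurally, the fact that the $I_k$ are Casimirs of the coadjoint action while $L$ is the conserved Hamiltonian). The computations themselves are short, so the real content is selecting the appropriate mechanism for each invariant.
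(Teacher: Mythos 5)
Your proposal is correct. For the invariants $I_k$ you argue exactly as the paper does: the transport identity $\Delta F(t,x)=(\Delta F)(\eta_t^{-1}(x))$ derived just before the theorem, combined with the fact that symplectomorphisms preserve $\mu=\tfrac1{n!}\omega^n$, gives $I_k(t)=I_k(0)$ by change of variables. The difference lies in how you treat $L$: the paper simply declares that the transport property ``immediately implies'' the whole theorem, offering no separate argument for $L$, whereas you correctly point out that transport of $\Delta F$ does \emph{not} by itself control $L=\tfrac12\int F\,\Delta F\,d\mu$, since $F=\Delta^{-1}(\Delta F)$ and $\Delta^{-1}$ is built from the metric $g$, which $\eta_t$ need not preserve. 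Your substitute arguments are both sound: the direct computation $\tfrac{dL}{dt}=\int F\,\{\Delta F,F\}\,d\mu=\tfrac12\int X_{\Delta F}(F^2)\,d\mu=0$, using self-adjointness of $\Delta$ and the divergence-freeness of Hamiltonian fields, and the structural observation that $L$ is the kinetic energy along the geodesic $\eta_t$ of Theorem \ref{Th10.3} (equivalently, the Hamiltonian of the Euler equation, conserved by ad-invariance of the bi-invariant product \eqref{(XF,XH)-D-omega-10.3}). Your parenthetical alternative for $I_k$ (differentiate and integrate by parts using $\div X_{\Delta F}=0$ and $X_{\Delta F}(\Delta F)=0$) also checks out. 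In short, you reproduce the paper's mechanism for $I_k$ and supply the justification for $L$ that the paper leaves implicit; identifying that these two conservation laws rest on different mechanisms is a genuine improvement in precision over the source.
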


\textbf{Remark 10.2.} Let $F=F(t,x)$ be a solution of the Euler equation (\ref{(Euler-Eq-10.8}), and let $F_0(x)= F(0,x)$ be the initial value. Since $\Delta F(t,x)=(\Delta F_0)(\eta_t^{-1}(x))$, it follows that the vector field $dR_{\eta_t} X_{\Delta F}$ is the restriction of the left-invariant vector field $dL_{\eta} X_{\Delta F_0}$ on the group $\mathcal{D}_{\omega H}$ to the geodesic $\eta_t$.  Therefore, the velocity field $dR_{\eta_t}X_{F}$ along the geodesic $\eta_t$ on $\mathcal{D}_{\omega H}$ assumes a unique value at each point of the geodesic $\eta_t$.  This implies the following conclusion.

\textbf{Conclusion.}   \emph{The geodesics on the group $\mathcal{D}_{\omega H}$ of  the right-invariant metric (\ref{Weak-Riem-D-omega-10.2}) cannot have self-intersections.}

\section{Curvature of the  group $\mathcal{D}_\omega$}\label{Curv-D-omega-10.2}
The  group $\mathcal{D}_{\omega H}$ has the  bi-invariant  weak  Riemannian  structure (\ref{(XF,XH)-D-omega-10.3}).  The curvature of the group $\mathcal{D}_{\omega H}$ with respect to (\ref{(XF,XH)-D-omega-10.3}) is easily found. The covariant derivative $\nabla^0$ of the Riemannian connection of the bi-invariant metric (\ref{(XF,XH)-D-omega-10.3}) on the group $\mathcal{D}_{\omega H}$ has the usual form:
\begin{equation}
\nabla^0_{X_F} X_H = \frac 12 [X_F,X_H]=\frac 12 X_{\{F,H\}}. 
\label{(nabla0-Eq-10.11}
\end{equation}
The curvature tensor of the connection $\nabla^0$ is:
\begin{equation}
R^0(X_F,X_H)X_G =-\frac 14\left [[X_F,X_H],X_G\right ]. 
\label{(R0-Eq-10.12}
\end{equation}

The  sectional  curvature  of  the  group $\mathcal{D}_{\omega H}$ with  respect  to the  bi-invariant  metric  in  the  direction  of  a 2-plane $\sigma$ given by an orthonormal pair of Hamiltonian vector fields $X_F,X_H \in \Gamma_{\omega \partial}(TM)$ is expressed by the formula
\begin{equation}
K_{\sigma} =\frac 14 \int_M \{F,H\}^2\  d\mu. 
\label{(K-sigma-Eq-10.13}
\end{equation}
Thus, the  group $\mathcal{D}_{\omega H}$ is  of  nonnegative  sectional  curvature  and  $K_{\sigma}=0$ iff  the Hamiltonians $F$ and $H$ commute: $\{F,H\}=0$.

Now let us consider the problem on the curvature of the group $\mathcal{D}_\omega$ with respect to the right-invariant weak Riemannian structure (\ref{Weak-Riem-D-omega-10.2}). The corresponding Riemannian connection $\widetilde{\nabla}$ on $\mathcal{D}_\omega$ is defined in the same way as in the case of the group $\mathcal{D}_\mu$. If $X$ and $Y$ are two right-invariant vector fields on $\mathcal{D}_\omega$, then
\begin{equation}
\left(\widetilde{\nabla}_X Y\right)_e = P_e(\nabla_X Y), 
\label{(nabla-Eq-10.14}
\end{equation}
where $P_e:\Gamma(TM)\rightarrow T_e \mathcal{D}_\omega$ is the orthogonal projection of the space $\Gamma(TM)$ of vector fields on $M$ on the space $T_e \mathcal{D}_\omega$ of locally Hamiltonian vector fields and $\nabla$ is the covariant derivative of the metric $g$ on $M$.

\begin{theorem} [see \cite{Smo22}] \label{Th10.5}
The sectional curvature of the group $\sigma$ in the direction of a 2-plane $\sigma$ given by an orthonormal pair of locally Hamiltonian vector fields $X,Y\in T_e \mathcal{D}_\omega$ is expressed by the formula
\begin{multline}
K_\sigma = -\frac 12\left(X,\left[[X,Y],Y\right]\right)_e -\frac
12\left(\left[X,[X,Y]\right],Y\right)_e -\frac 34\left([X,Y],[X,Y]\right)_e - {} \\*
-\left(P_e(\nabla_X X),P_e(\nabla_Y Y)\right)_e + \frac
14\left(P_e(\nabla_X Y+\nabla_Y X),P_e(\nabla_X Y+\nabla_Y X)\right)_e, 
\label{(K-sigma-Eq-10.15}
\end{multline}
where $P_e:\Gamma(TM)\rightarrow \mathcal{D}_\omega$ is the orthogonal projection.
\end{theorem}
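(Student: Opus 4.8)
The plan is to evaluate the curvature tensor at the identity, exploiting the right-invariance of the metric (\ref{Weak-Riem-D-omega-10.2}). First I extend $X,Y$ to right-invariant vector fields $\bar X,\bar Y$ on $\mathcal{D}_\omega$. Right translations are isometries of (\ref{Weak-Riem-D-omega-10.2}), so they preserve its Levi--Civita connection $\widetilde\nabla$; consequently $\widetilde\nabla_{\bar X}\bar Y$ is again right-invariant, and by (\ref{(nabla-Eq-10.14}) its value at $e$ is $P_e(\nabla_X Y)$. This lets me apply (\ref{(nabla-Eq-10.14}) iteratively. A short flow computation shows that the Lie bracket of right-invariant fields on $\mathcal{D}_\omega$ equals the vector-field bracket $[X,Y]$ at $e$ (with a plus sign). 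Substituting into $R(X,Y)Y=\widetilde\nabla_X\widetilde\nabla_Y Y-\widetilde\nabla_Y\widetilde\nabla_X Y-\widetilde\nabla_{[X,Y]}Y$ and evaluating at $e$ gives
\[
R(X,Y)Y\big|_e = P_e\nabla_X P_e\nabla_Y Y - P_e\nabla_Y P_e\nabla_X Y - P_e\nabla_{[X,Y]}Y,
\]
so that $K_\sigma=(R(X,Y)Y,X)_e$ becomes a sum of three inner products.

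To simplify them I use two facts. First, $P_e$ is self-adjoint for (\ref{Weak-Riem-D-omega-10.2}) with image $T_e\mathcal{D}_\omega$, and $P_eX=X$, so the outer projection may be dropped whenever the result is paired with $X$. Second, every locally Hamiltonian field is divergence-free, since $L_X\mu=\frac1{n!}\,L_X(\omega^n)=0$; hence $\int_M X\,g(U,V)\,d\mu=0$ and therefore $(\nabla_X U,V)_e=-(U,\nabla_X V)_e$. Applying this integration by parts to the first two terms gives
\[
(P_e\nabla_X P_e\nabla_Y Y,X)_e=-(P_e\nabla_X X,P_e\nabla_Y Y)_e,
\]
\[
(P_e\nabla_Y P_e\nabla_X Y,X)_e=-(P_e\nabla_X Y,P_e\nabla_Y X)_e,
\]
which already yield the term $-(P_e(\nabla_X X),P_e(\nabla_Y Y))_e$ and a cross term $+(P_e(\nabla_X Y),P_e(\nabla_Y X))_e$.

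It remains to reduce the third term $(\nabla_{[X,Y]}Y,X)_e$ to brackets and to match the cross term against the quadratic term of (\ref{(K-sigma-Eq-10.15}). Writing $T(A;B,C)=\int_M g(\nabla_A B,C)\,d\mu$, the divergence-free identity gives $T(A;B,C)=-T(A;C,B)$ for $A\in\{X,Y,[X,Y]\}$, while the torsion-free property gives $T(A;B,C)-T(B;A,C)=([A,B],C)_e$. Cycling these two relations through the permutations of $X,Y,Z$ with $Z=[X,Y]$ solves for
\[
(\nabla_{[X,Y]}Y,X)_e=\tfrac12([X,[X,Y]],Y)_e+\tfrac12([[X,Y],Y],X)_e+\tfrac12([X,Y],[X,Y])_e .
\]
Finally, with $A=P_e(\nabla_X Y)$ and $B=P_e(\nabla_Y X)$ one has $A-B=P_e[X,Y]=[X,Y]$, whence $\tfrac14(A+B,A+B)_e=\tfrac14([X,Y],[X,Y])_e+(A,B)_e$; this converts the cross term together with the coefficient $-\tfrac34$ in (\ref{(K-sigma-Eq-10.15}) into $+(A,B)_e-\tfrac12([X,Y],[X,Y])_e$, and assembling all contributions reproduces (\ref{(K-sigma-Eq-10.15}) exactly.

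I expect the main obstacle to be the purely algebraic reduction of $(\nabla_{[X,Y]}Y,X)_e$: one must track the six permutations of the trilinear form $T$ without sign errors, and must correctly fix the sign of the group bracket of right-invariant fields, since reversing it would flip the cross term against the quadratic term and spoil the match. Once this bookkeeping is done, (\ref{(K-sigma-Eq-10.15}) follows.
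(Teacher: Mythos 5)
The paper itself gives no proof of Theorem \ref{Th10.5}; it is quoted from \cite{Smo22}, so there is nothing in the present text to compare your argument with line by line. Judged on its own, your derivation is correct and complete at the paper's level of rigor, and it is the standard Arnold--Ebin--Marsden route one would expect the cited source to follow: extend $X,Y$ to right-invariant fields, use (\ref{(nabla-Eq-10.14}) together with the fact that right translations are isometries (so $\widetilde\nabla_{\bar X}\bar Y$ is again right-invariant, which is what lets you iterate the formula), and evaluate $R(X,Y)Y$ at $e$. I checked the three points on which the signs hinge and they are all right: the bracket of right-invariant fields at $e$ is indeed $+[X,Y]$ for composition as group law; locally Hamiltonian fields preserve $\mu=\omega^n/n!$, so $\nabla_A$ is skew-adjoint for the $L^2$ pairing whenever $A\in\{X,Y,[X,Y]\}$, which gives $-(P_e\nabla_XX,P_e\nabla_YY)_e$ and the cross term $+(P_e\nabla_XY,P_e\nabla_YX)_e$; and the permutation argument with $T(A;B,C)$ does yield $(\nabla_{[X,Y]}Y,X)_e=\tfrac12([X,[X,Y]],Y)_e+\tfrac12([[X,Y],Y],X)_e+\tfrac12([X,Y],[X,Y])_e$. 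Your final polarization step is also sound, since $P_e(\nabla_XY-\nabla_YX)=P_e[X,Y]=[X,Y]$ (the bracket of locally Hamiltonian fields is Hamiltonian), and the identity $\tfrac14\|A+B\|^2=\tfrac14\|[X,Y]\|^2+(A,B)_e$ turns your expression into exactly (\ref{(K-sigma-Eq-10.15}); the result is moreover consistent with how (\ref{(K-sigma-Eq-10.15}) is later specialized via Corollary \ref{Cor10.7} to (\ref{(K-sigma-Eq-10.19}). Two small caveats worth stating explicitly if you write this up: the existence and uniqueness of the Levi--Civita connection of the weak metric (\ref{Weak-Riem-D-omega-10.2}) is not automatic and is taken from Ebin--Marsden, and the iterated covariant derivative loses derivatives, so the computation should formally be carried out in the $H^s$ (ILH) setting, as the paper does elsewhere.
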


For the group $\mathcal{D}_{\omega H}$, we can obtain a more convenient formula for the sectional curvatures expressed through the Hamiltonians of vector fields $X=X_F$ and $Y=X_H$. First, we give the following characterization of the Hamiltonian component $P_e(\nabla_X Y)\in \Gamma_{\omega \partial}(TM)$ of the vector field of the covariant derivative $\nabla_X Y$ on $M$.

\begin{lemma} \label{Lem10.6}
For any Hamiltonian vector fields $X=X_F$ and $Y=X_H$ on $M$,  the  Hamiltonian $S$ of the vector field $X_S=P_e(\nabla_X Y)\in \Gamma_{\omega \partial}(TM)$ is connected with the Hamiltonians $F$  and $H$   by the relation
\begin{equation}
\Delta S =\frac 12\left(\Delta\{F,H\}+\{F,\Delta H\}+\{H,\Delta F\}\right), 
\label{S-Eq-10.16}
\end{equation}
where $\Delta =-\div \circ \grad$ is the Laplacian and $\{F,H\}$ is the Poisson bracket.
\end{lemma}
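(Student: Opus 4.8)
The plan is to identify $S$ through the defining property of the orthogonal projection $P_e$ rather than by computing the field $\nabla_{X_F}X_H$ explicitly. Since $X_S=P_e(\nabla_{X_F}X_H)$ is the orthogonal projection of $\nabla_{X_F}X_H$ onto $\Gamma_{\omega\partial}(TM)$ with respect to the weak $L^2$-metric, it is characterized by
\[
\int_M g(\nabla_{X_F}X_H,X_G)\,d\mu=(X_S,X_G)_e\qquad\text{for every Hamiltonian field }X_G.
\]
By Theorem \ref{Th10.1} the right-hand side equals $\int_M(\Delta S)\,G\,d\mu$. Hence it suffices to evaluate the left-hand integral and rewrite it in the form $\int_M(\cdots)\,G\,d\mu$: the coefficient of $G$ is then $\Delta S$, and because $X_G$ ranges over all Hamiltonian fields, $G$ ranges over all of $C_0^\infty(M,\R)$, on which $\Delta$ is injective, so this determines $S$ uniquely.

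First I would apply the Koszul formula
\[
2g(\nabla_XY,Z)=Xg(Y,Z)+Yg(X,Z)-Zg(X,Y)+g([X,Y],Z)-g([X,Z],Y)-g([Y,Z],X)
\]
with $X=X_F$, $Y=X_H$, $Z=X_G$ and integrate over $M$. The decisive simplification is that all three Hamiltonian fields are divergence-free (they preserve $\mu$), so $\int_M X_F\big(g(X_H,X_G)\big)\,d\mu=0$ and likewise for the other two derivative terms; only the bracket terms survive. Using $[X_A,X_B]=X_{\{A,B\}}$ and Theorem \ref{Th10.1} in the form $\int_M g(X_A,X_B)\,d\mu=\int_M(\Delta A)\,B\,d\mu$, I would obtain
\[
2\int_M g(\nabla_{X_F}X_H,X_G)\,d\mu=\int_M\big(\Delta\{F,H\}\,G-\Delta\{F,G\}\,H-\Delta\{H,G\}\,F\big)\,d\mu.
\]

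The remaining step, and the only place where genuine care is needed, is to move every operator off $G$ in the last two terms. Using self-adjointness of $\Delta$, the identity $\{A,B\}=X_A(B)$, and a second integration by parts along the divergence-free fields $X_H$ and $X_F$, I would rewrite $-\int_M\Delta\{H,G\}\,F\,d\mu=\int_M\{H,\Delta F\}\,G\,d\mu$ and $-\int_M\Delta\{F,G\}\,H\,d\mu=\int_M\{F,\Delta H\}\,G\,d\mu$. Collecting the coefficient of $G$ yields $2\Delta S=\Delta\{F,H\}+\{F,\Delta H\}+\{H,\Delta F\}$, which is the asserted relation. The main obstacle is thus purely bookkeeping: keeping the signs straight through the two integrations by parts and the antisymmetry of the Poisson bracket. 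As a consistency check, the formula gives $\Delta\big(S(F,H)-S(H,F)\big)=\Delta\{F,H\}$, i.e. $P_e(\nabla_{X_F}X_H)-P_e(\nabla_{X_H}X_F)=X_{\{F,H\}}=[X_F,X_H]$, exactly as required since $P_e$ fixes the Lie bracket.
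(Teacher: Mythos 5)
Your proof is correct, and its computational skeleton coincides with the paper's: pair the unknown projection against an arbitrary Hamiltonian test field $X_G$, kill the three derivative terms of the Koszul formula, convert the $L^2$-pairings via Theorem \ref{Th10.1}, and shuffle the bracket terms onto $G$. The only difference is one of packaging: the paper first invokes the Ebin--Marsden identification $(\widetilde{\nabla}_X Y)_e = P_e(\nabla_X Y)$ and then applies the six-term formula to the right-invariant metric on the group, where the derivative terms vanish by right-invariance and the bracket terms are rearranged by the ad-invariance (\ref{(Eq-10.4}) of the bi-invariant product; you instead apply the pointwise Koszul formula for $\nabla$ on $M$ under the integral, where the derivative terms vanish because locally Hamiltonian fields are divergence-free and the rearrangement is an explicit integration by parts along $X_F$ and $X_H$ --- which is exactly the identity (\ref{(Eq-10.4}) written out on $M$. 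Your version is slightly more self-contained, since it needs only the defining property of the orthogonal projection $P_e$ rather than the existence of the connection $\widetilde{\nabla}$ on the group. One pedantic point, present at the same level in the paper's proof: testing against all $G\in C_0^\infty(M,\mathbb{R})$ determines $2\Delta S - \bigl(\Delta\{F,H\}+\{F,\Delta H\}+\{H,\Delta F\}\bigr)$ only up to an additive constant, and one should note that both sides have zero mean (each term is either in the image of $\Delta$ or of the form $X_F(\cdot)$ with $X_F$ divergence-free), so the constant is zero.
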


\begin{proof}
It was demonstrated in \cite{Eb-Mar} that, given the weak right-invariant Riemannian structure (\ref{Weak-Riem-D-omega-10.2}) on $\mathcal{D}_{\omega \partial}$, there exists a Riemannian connection whose covariant derivative $\widetilde{\nabla}$ at the identity $e\in \mathcal{D}_{\omega \partial}$ is given by the formula
$$
(\widetilde{\nabla}_X Y)_e = P_e(\nabla_{X_e} Y_e),
$$
where $\nabla$ is the covariant derivative of the Riemannian connection on $M$ and $X(\eta)=X_e \circ \eta$, $Y(\eta)=Y_e \circ \eta$ are right-invariant vector fields on $\mathcal{D}_{\omega \partial}$, $X_e, Y_e\in T_e\mathcal{D}_{\omega \partial}$.
For determining $P_e(\nabla_{X_e} Y_e) = (\widetilde{\nabla}_X Y)_e$, we use the six-term formula
$$
2(\widetilde{\nabla}_X Y, Z)_e = X(Y,Z)+Y(Z,X)-Z(X,Y)+(Z,[X,Y])_e+(Y,[Z,X])_e-(X,[Y,Z])_e,
$$
where $X=X_F$, $Y=X_H$ and $Z=X_G$ are regarded as right-invariant vector fields on $\mathcal{D}_{\theta}$. Taking the right invariance of the weak Riemannian structure (\ref{Weak-Riem-D-omega-10.2}) into account, we obtain $X(Y,Z)=Y(Z,X)=Z(X,Y)=0$. Using the bi-invariant scalar product (\ref{(XF,XH)-D-omega-10.3}), we obtain
$$
2(\widetilde{\nabla}_X Y, Z)_e =(X_G,[X_F,X_H])_e+(X_H,[X_G,X_F])_e-(X_F,[X_H,X_G])_e =
$$
$$
=(X_G,X_{[F,H]})_e+(X_H,X_{[G,F]})_e-(X_F,X_{[H,G]})_e =
$$
$$
=\langle X_G,X_{\Delta[F,H]}\rangle_e+\langle X_{\Delta H},X_{[G,F]}\rangle_e- \langle X_{\Delta F},X_{[H,G]}\rangle_e =$$
$$
=\langle X_G,X_{\Delta[F,H]}\rangle_e+\langle [X_F,X_{\Delta H}],X_{G}\rangle_e +\langle [X_H,X_{\Delta F}],X_{G}\rangle_e =
$$
$$
=\langle X_{\Delta[F,H]}+X_{[F,\Delta H]} + X_{[H,\Delta F]},X_{G}\rangle_e.
$$
On the other hand, we have
$$
2(\widetilde{\nabla}_X Y, Z)_e =2(P_e(\nabla_{X}Y),Z)_e = 2(X_S,X_G)_e =2\langle X_{\Delta S},X_G\rangle_e.
$$
\end{proof}

\begin{corollary} \label{Cor10.7}
Let $X=X_F$ and $Y=X_H$.  If $P_e(\nabla_X X)=X_S$ and $P_e(\nabla_X Y+\nabla_Y X)=X_T$, then
\begin{equation}
\Delta S =\{F,\Delta F\}, 
\label{Delta-S-Eq-10.17}
\end{equation}
\begin{equation}
\Delta T =\{F,\Delta H\} + \{H,\Delta F\}. 
\label{Delta-T-Eq-10.18}
\end{equation}
\end{corollary}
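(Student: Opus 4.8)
The plan is to derive both identities directly from Lemma~\ref{Lem10.6}, since each is merely a specialization of the general formula $\Delta S = \frac{1}{2}(\Delta\{F,H\}+\{F,\Delta H\}+\{H,\Delta F\})$ combined with the elementary algebraic properties of the Poisson bracket and the linearity of the projection $P_e$. No new analytic input is needed; everything reduces to substitution and the antisymmetry $\{H,F\}=-\{F,H\}$.

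For the first identity~(\ref{Delta-S-Eq-10.17}) I would simply set $Y=X$, i.e. $H=F$, in the lemma. The antisymmetry of the Poisson bracket gives $\{F,F\}=0$, hence $\Delta\{F,F\}=0$, while the two remaining summands coincide. Thus $\Delta S = \frac{1}{2}\bigl(0 + \{F,\Delta F\}+\{F,\Delta F\}\bigr) = \{F,\Delta F\}$, as claimed.

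For the second identity~(\ref{Delta-T-Eq-10.18}) I would use the linearity of $P_e$ together with the fact that $X_S\mapsto S$ is linear and single-valued once Hamiltonians are normalized to lie in $C_0^\infty(M,\R)$ (legitimate because $\Delta$ is an isomorphism of $C_0^\infty(M,\R)$, so each Hamiltonian vector field has a unique normalized Hamiltonian). Writing $P_e(\nabla_X Y)=X_{S_1}$ and $P_e(\nabla_Y X)=X_{S_2}$, we have $T=S_1+S_2$. Applying the lemma once to the ordered pair $(F,H)$ and once to the reversed pair $(H,F)$ gives $\Delta S_1 = \frac{1}{2}(\Delta\{F,H\}+\{F,\Delta H\}+\{H,\Delta F\})$ and $\Delta S_2 = \frac{1}{2}(\Delta\{H,F\}+\{H,\Delta F\}+\{F,\Delta H\})$. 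Since $\{H,F\}=-\{F,H\}$, the two Laplacian-of-bracket terms cancel on addition, and the surviving terms double up, yielding $\Delta T = \Delta(S_1+S_2)=\{F,\Delta H\}+\{H,\Delta F\}$.

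The computation presents no genuine obstacle; the only step meriting attention is the cancellation of the $\Delta\{F,H\}$ contributions in the symmetrized sum. This cancellation is precisely what separates the symmetric combination $\nabla_X Y+\nabla_Y X$ (producing $T$, with no $\Delta\{F,H\}$ term) from a single summand $\nabla_X Y$ (whose Hamiltonian retains the $\frac{1}{2}\Delta\{F,H\}$ piece), and it is exactly this feature that makes $T$ the natural quantity entering the curvature formula~(\ref{(K-sigma-Eq-10.15}). I would also remark that the well-definedness of the map $S_1,S_2\mapsto S_1+S_2$ is guaranteed by the invertibility of $\Delta$ on $C_0^\infty(M,\R)$ noted earlier.
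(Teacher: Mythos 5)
Your proposal is correct and follows essentially the same route the paper intends: Corollary~\ref{Cor10.7} is obtained directly from Lemma~\ref{Lem10.6} by setting $H=F$ for (\ref{Delta-S-Eq-10.17}) and by adding the lemma applied to $(F,H)$ and $(H,F)$, with the $\Delta\{F,H\}$ terms cancelling by antisymmetry, for (\ref{Delta-T-Eq-10.18}). Nothing further is needed.
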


Using the expressions for $\Delta S$ and $\Delta T$ we obtain from (\ref{(K-sigma-Eq-10.15}) the following expression for $K_\sigma$.

\begin{theorem} \label{Th10.8}
The sectional curvature of the  group $\mathcal{D}_{\omega H}$ (with  metric (\ref{Weak-Riem-D-omega-10.2})) in  the  direction of  a 2-plane $\sigma\in T_e\mathcal{D}_{\omega H}$ given by an orthonormal  pair of  Hamiltonian  vector  fields $X_F,X_H\in \Gamma_{\omega \partial}(TM)$ is expressed by the formula
\begin{multline}
K_\sigma=-\frac 34\int_M \Delta\{F,H\}\{F,H\}\ d\mu + {} \\*
+\frac 12\int_M \{F,H\}(\{F,\Delta H\}+\{\Delta F,H\})\ d\mu -
\int_M \{F,\Delta F\}\Delta^{-1}(\{H,\Delta H\})\ d\mu + {} \\*
+\frac 14 \int_M (\{F,\Delta H\}+\{H,\Delta F\}) \Delta^{-1}(\{F, \Delta H\} +\{H,\Delta F\})\ d\mu.
\label{(K-sigma-Eq-10.19}
\end{multline}
\end{theorem}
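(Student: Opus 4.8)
The plan is to reduce formula (\ref{(K-sigma-Eq-10.15}) of Theorem \ref{Th10.5} to an expression purely in the Hamiltonians $F$ and $H$ by feeding into it three ingredients already available: the bracket rule $[X_A,X_B]=X_{\{A,B\}}$; the identity $(X_P,X_Q)_e=\int_M(\Delta P)Q\,d\mu=\int_M P(\Delta Q)\,d\mu$ from Theorem \ref{Th10.1}, whose second equality uses the self-adjointness of $\Delta$; and the Hamiltonians of the projected fields supplied by Corollary \ref{Cor10.7}. The whole argument is then a matter of organized substitution and one genuine simplification.

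First I would translate every iterated Lie bracket in (\ref{(K-sigma-Eq-10.15}) into a Hamiltonian field: with $X=X_F$, $Y=X_H$ one has $[X,Y]=X_{\{F,H\}}$, $[[X,Y],Y]=X_{\{\{F,H\},H\}}$ and $[X,[X,Y]]=X_{\{F,\{F,H\}\}}$. Theorem \ref{Th10.1} then turns the third term $-\tfrac34([X,Y],[X,Y])_e$ directly into $-\tfrac34\int_M\Delta\{F,H\}\{F,H\}\,d\mu$, the first summand of (\ref{(K-sigma-Eq-10.19}). For the two projection terms I invoke Corollary \ref{Cor10.7}: writing $P_e(\nabla_XX)=X_S$, $P_e(\nabla_YY)=X_{S'}$ with $\Delta S=\{F,\Delta F\}$, $\Delta S'=\{H,\Delta H\}$, and $P_e(\nabla_XY+\nabla_YX)=X_T$ with $\Delta T=\{F,\Delta H\}+\{H,\Delta F\}$, the term $-(P_e(\nabla_XX),P_e(\nabla_YY))_e=-\int_M(\Delta S)S'\,d\mu$ becomes the third summand $-\int_M\{F,\Delta F\}\,\Delta^{-1}(\{H,\Delta H\})\,d\mu$, and $\tfrac14(X_T,X_T)_e=\tfrac14\int_M(\Delta T)T\,d\mu$ becomes the last summand of (\ref{(K-sigma-Eq-10.19}).

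The remaining and most delicate step is to show that the two transvection terms $-\tfrac12(X,[[X,Y],Y])_e-\tfrac12([X,[X,Y]],Y)_e$ collapse to the single symmetric summand $\tfrac12\int_M\{F,H\}(\{F,\Delta H\}+\{\Delta F,H\})\,d\mu$. Here I exploit that every Hamiltonian field is divergence-free, $\div X_A=0$, so that $\int_M X_A(B)\,d\mu=0$ on the closed manifold $M$ and hence the integration-by-parts rule $\int_M\{A,B\}\,C\,d\mu=-\int_M B\,\{A,C\}\,d\mu$ holds. By Theorem \ref{Th10.1} the two terms read $-\tfrac12\int_M(\Delta F)\{\{F,H\},H\}\,d\mu$ and $-\tfrac12\int_M\{F,\{F,H\}\}\,\Delta H\,d\mu$; setting $G=\{F,H\}$ and applying the rule to move the bracket off the Laplacian factor (together with antisymmetry $\{A,B\}=-\{B,A\}$) turns them respectively into $\tfrac12\int_M G\,\{\Delta F,H\}\,d\mu$ and $\tfrac12\int_M G\,\{F,\Delta H\}\,d\mu$, whose sum is exactly the desired second summand.

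Once the three tools are in place the computation is essentially bookkeeping; the only real risk is a sign slip in the repeated integration by parts and in tracking which field $X_A$ is being integrated against in each application of the divergence-free identity, so I would carry the antisymmetry of $\{\,,\,\}$ explicitly through every step. Assembling the five resulting pieces yields formula (\ref{(K-sigma-Eq-10.19}) and completes the proof.
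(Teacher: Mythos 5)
Your proposal is correct and follows essentially the route the paper itself indicates (the paper merely states that substituting the expressions for $\Delta S$ and $\Delta T$ from Corollary \ref{Cor10.7} into formula (\ref{(K-sigma-Eq-10.15}) yields (\ref{(K-sigma-Eq-10.19})): you translate the brackets via $[X_A,X_B]=X_{\{A,B\}}$, convert the pairings with Theorem \ref{Th10.1}, and handle the two transvection terms by the ad-invariance/integration-by-parts identity, all with correct signs. Your filled-in computation is a faithful expansion of the paper's one-line derivation, so no further changes are needed.
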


In the case where, as the Hamiltonians $F$ and $H$, we take the eigenfunctions of the Laplace operator, $\Delta F = \alpha F$ and $\Delta H = \beta H$, the formula for the sectional curvatures of the group $\mathcal{D}_{\omega H}$  becomes
\begin{multline}
K_\sigma=-\frac 34\int_M \Delta\{F,H\}\{F,H\}\ d\mu + {} \\*
+\frac {\alpha+\beta}{2}\int_M \{F,H\}^2\ d\mu + \frac
{(\alpha-\beta)^2}{4}\int_M \{F,H\}\Delta^{-1} \{F,H\}\ d\mu. 
\label{(K-sigma-Eq-10.20}
\end{multline}

Assume that the structural constants of the Lie algebra $\Gamma_{\omega \partial}(TM)$,
$$
\{F,H\}=C^i_{FH}F_i,
$$
where $F_i$ is the orthonormal system of eigenfunctions of the Laplace operator corresponding to eigenvalues $\lambda_i$, are known. In this case, the formula for the sectional curvature of the group $\mathcal{D}_{\omega H}$ assumes a simpler form
\begin{equation}
K_\sigma=\frac {1}{\alpha \beta}\left( -\frac 34 \sum_{i>0}
\lambda_i (C^i_{FH})^2 +\frac {\alpha+\beta}{2}\sum_{i>0}(C^i_{FH})^2 + \frac
{(\alpha-\beta)^2}{4}\sum_{i>0} \frac{(C^i_{FH})^2}{\lambda_i}\right).
\label{(K-sigma-Eq-10.21}
\end{equation}

In this formula, it is assumed that the functions $F$ and $G$ have  unit $L^2$-norms;  then  $\|X_F\|^2 =\alpha$ and $\|X_H\|^2 =\beta$.  This explains the appearance of the coefficient $\frac {1}{\alpha \beta}$ in formula (\ref{(K-sigma-Eq-10.21}).


\subsubsection*{Structural constants of the Lie algebra $T_e \mathcal{D}_\omega(S^2)$ in the basis of spherical functions}

Introduce the coordinates $(z,\varphi)$ on two-dimensional sphere $S^2$:
$$
\begin{array}{l}
  x = \sqrt{1-z^2}\sin \varphi  \\
  y = \sqrt{1-z^2}\cos \varphi
\end{array},\qquad 0< \varphi < 2\pi,\ -1 < z < 1.
$$
Then the Riemannian volume element $\omega$ on $S^2$ has the form $\omega=dz\wedge d\varphi$. Therefore, $z$ and $\varphi$ are canonical variables on the sphere $S^2$.  Therefore, the Poisson bracket of two functions $F$ and $H$ on $S^2$ is calculated according to the usual formula $\{F,H\}= \frac {\partial H}{\partial z}\frac {\partial F}{\partial \varphi} - \frac {\partial H}{\partial \varphi}\frac {\partial F}{\partial z}$.

The spherical functions
\begin{equation}
Y_m^l(z,\varphi)= \frac {(-1)^l}{2^l l!} \sqrt{\frac
{(2l+1)(l-m)!}{4\pi (l+m)!}}e^{im\varphi}(1-z^2)^{m/2}
\frac{d^{l+m}}{dz^{l+m}}(1-z^2)^{l} , 
\label{Y-ml-Eq-8.6}
\end{equation}
where $l\in \mathbb{N}$ and $-l\leq m \leq l$, compose a complete  orthonormal  function  system on $S^2$ (see, e.g., \cite{La-Li}).  The functions $Y_m^l(z,\varphi)$ are the eigenfunctions of the Laplace operator: $\Delta Y_m^l=l(l+1)Y_m^l$.

The structural constants of the Lie algebra $T_e \mathcal{D}_\omega(S^2)$ in the basis of spherical functions  were  calculated  in  the  work \cite{Ara-Sav} of  Arakelyan  and  Savvidy;  also,  in  this  work,  the  curvature tensor was found.
We set $Y_{lm}(z,\varphi):=Y_m^l(z,\varphi)$; then the structural constants $C_{nmkl}^{ij}$,
$$
\{Y_{nm}, Y_{kl}\} =C_{nm,kl}^{ij}\ Y_{ij},
$$
have the following form \cite{Ara-Sav}:
$$
C_{nm,kl}^{ij} =-i(-1)^j\sqrt{\frac {(2n+1)(2k+1)(2l+1)}{4\pi}}\
l\sum_{p}(2(n-2p-1)+1)\times
$$
$$
\times\sqrt{\frac {(n-|m|)\dots (n-|m|-2p)}{(n+|m|)\dots
(n+|m|-2p)}}\left( \begin{array}{ccc}
  n-2p-1 & k & i \\
  m & l & -j
\end{array}\right)
\left( \begin{array}{ccc}
  n-2p-1 & k & i \\
  0 & 0 & 0
\end{array}\right)-
$$
$$
-m\sum_{q}(2(k-2q-1)+1)\sqrt{\frac {(k-|l|)\dots
(k-|l|-2q)}{(k+|l|)\dots (k+|l|-2q)}}\times
$$
$$
\times \left(
\begin{array}{ccc}
  n & k-2q-1 & i \\
  m & l & -j
\end{array}\right)
\left( \begin{array}{ccc}
  n & k-2q-1 & i \\
  0 & 0 & 0
\end{array}\right),
$$
where $\left( \begin{array}{ccc}
  n & k & i \\
  m & l & j
\end{array}\right)$
are $3j$-Wigner symbols (see, e.g., \cite{La-Li}).

\subsection{Curvature of the symplectic diffeomorphism group of the torus}\label{Curv-D-omega-torus-10.3}
Let $T^{2q}=\mathbb{R}^{2q}/2\pi \mathbb{Z}^{2q}$ be a torus. We assume that the standard symplectic form $\omega =\sum_{i=1}^q dx_i\wedge dy_i$ is given on $T^{2q}$. The functions
$$
\cos(nx+my),\quad  \sin(nx+my),
$$
where $n,m \in \mathbb{Z}^q$, $nx=\sum_{i=1}^q n_i x_i$ compose a complete orthogonal function system on the torus $T^{2q}$.

Consider a small  two-dimensional  area $\sigma\subset \Gamma_{\omega \partial}(TM)$ composed,  e.g.,  by  the  vector  fields $X_F$ and $X_H$ with the Hamiltonians
$$
F =\cos(nx+my),\quad H=\cos(kx+ly),
$$
under the condition $n^2+m^2\neq 0$ and $k^2+l^2 \neq 0$.  The Poisson bracket is
\begin{equation}
\{F,H\} =\frac 12(mk-nl)\left(\cos((n-k)x+(m-l)y)-\cos((n+k)x +(m+l)y)\right). 
\label{({F,H}-Eq-10.22}
\end{equation}
Under the integration over the torus $T =T^{2q}$ with respect to $d\mu =dx_1\wedge ...\wedge dy_q$ we have
$$
\int_T \cos(nx+my)\cos(kx+ly)d\mu =\left\{\begin{array}{l} 0,\quad (n,m)\neq (k,l) \\
  \frac 12 (2\pi)^{2q}, \quad n=k, m=l
\end{array} \right..
$$
Therefore, $\|F\|^2=\frac 12 (2\pi)^{2q}$ and
$$
\int_T \{F,H\}^2\  d\mu = \frac 14(mk-nl)^2\left(\frac 12
(2\pi)^{2q}+\frac 12 (2\pi)^{2q}\right)=(2\pi)^{2q}\frac {(mk-nl)^2}{4}.
$$
For the sectional curvature of the bi-invariant metric
$$
K_\sigma = \frac {1}{4\|F\|^2\|H\|^2} \int_T \{F,H\}^2\  d\mu,
$$
we obtain the expression
\begin{equation}
K_\sigma = \frac {(mk-nl)^2}{4(2\pi)^{2q}}. 
\label{(K_sigma-Eq-10.23}
\end{equation}
In the case $H=\sin(kx+ly)$ and two sines, we obtain exactly the same formula.

Therefore,  the  sectional  curvatures  of  the  bi-invariant  metric  are  nonnegative  and  can  assume  arbitrarily large values.  This yields a geometric explanation of the fact that the group exponential (i.e., the exponential of the bi-invariant metric) does not cover a neighborhood of the identity of the diffeomorphism group.

Let us calculate the sectional curvature of the right-invariant metric on the group $\mathcal{D}_{\omega H}(T^{2q})$.  Consider a two-dimensional small area $\sigma\subset \Gamma_{\omega \partial}(TM)$ composed of the vector fields $X_F$ and $X_H$ with the Hamiltonians
$$
F =\cos(nx+my),\quad H=\cos(kx+ly),
$$
under  the  condition $n^2+m^2\neq 0$ and $k^2+l^2 \neq 0$. These  functions  are  eigenfunctions  of  the  Laplace operator:
$$
\Delta F=(n^2+m^2)F,\ \lambda=n^2+m^2,\qquad \Delta H
=(k^2+l^2)H,\ \mu=k^2+l^2,
$$
Therefore:
$$
\|X_F\|^2 =\lambda \|F\|^2 =\lambda\frac 12 (2\pi)^{2q},\quad
\|X_H\|^2= \mu \|H\|^2 = \mu \frac 12 (2\pi)^{2q},
$$
$$
\int_T \{F,H\}^2\  d\mu = (2\pi)^{2q}\frac {(mk-nl)^2}{4},
$$
$$
\int_T \{F,H\} \Delta\{F,H\}\  d\mu = (2\pi)^{2q}\frac
{(mk-nl)^2}{8}\left((n-k)^2+(m-l)^2+(n+k)^2+(m+l)^2 \right),
$$
$$
\int_T \{F,H\} \Delta^{-1}\{F,H\}\  d\mu = (2\pi)^{2q}\frac
{(mk-nl)^2}{8}\left(\frac {1}{(n-k)^2+(m-l)^2}+\frac
{1}{(n+k)^2+(m+l)^2}\right).
$$
By formula (\ref{(K-sigma-Eq-10.20}), we obtain
\begin{equation}
K_\sigma=-\frac
{(mk-nl)^4(n^2+m^2+k^2+l^2)}{(2\pi)^{2q}(n^2+m^2)(k^2+l^2)((n-k)^2+(m-l)^2)((n+k)^2+
(m+l)^2))}. 
\label{(K_sigma-Eq-10.24}
\end{equation}

We  see  that  the  sectional  curvatures  of  the  right-invariant  metric  are  nonpositive,  and  their  modules grow in a lesser power, which completely corresponds to the fact that the Riemannian exponential of the right-invariant metric is a local diffeomorphism.


%
%

\end{document}